\documentclass[11pt,reqno]{article}

\usepackage[usenames]{color}
\usepackage{graphicx}
\usepackage{amscd}
\usepackage[english]{babel}
\usepackage{color}
\usepackage{fullpage}
\usepackage{psfig}
\usepackage{graphics,amsmath,amssymb}
\usepackage{amsthm}
\usepackage{amsfonts}
\usepackage{latexsym}
\usepackage{epsf}
\usepackage{float}

\usepackage[colorlinks=true,
linkcolor=webgreen,
filecolor=webbrown,
citecolor=webgreen]{hyperref}

\definecolor{webgreen}{rgb}{0,.5,0}
\definecolor{webbrown}{rgb}{.6,0,0}

\setlength{\textwidth}{6.5in}
\setlength{\oddsidemargin}{.1in}
\setlength{\evensidemargin}{.1in}
\setlength{\topmargin}{-.5in}
\setlength{\textheight}{8.9in}

\newcommand{\seqnum}[1]{\href{http://oeis.org/#1}{\underline{#1}}}
\newcommand{\beql}[1]{\begin{equation}\label{#1}}
\newcommand{\eeq}{\end{equation}}
\newcommand{\eqn}[1]{(\ref{#1})}

\newcommand{\sP}{{\cal{P}}}

\newtheorem{thm}{Theorem}{\bfseries}{\itshape}
{\bfseries}{\itshape}
{\bfseries}{\itshape}
{\bfseries}{\itshape}
{\bfseries}{\itshape}

\hyphenation{sub-sequence}

\newcommand{\al}{\alpha}

\newcommand{\ka}{\kappa}

\newcommand{\la}{\lambda}
\newcommand{\si}{\sigma}

\begin{document}
\theoremstyle{plain}

\begin{center}
{\large\bf The Yellowstone Permutation } \\
\vspace*{+.2in}

David L. Applegate \\
AT\&T \\
One AT\&T Way \\
Bedminster, NJ 07921 \\
USA \\
\href{mailto:david@research.att.com}{\tt david@research.att.com} \\ 
\ \\
Hans Havermann \\
11 Sykes Ave. \\
Weston, ON M9N 1C8 \\
Canada \\
\href{mailto:gladhobo@teksavvy.com}{\tt gladhobo@teksavvy.com} \\
\ \\
Robert G. Selcoe \\
16214 Madewood St. \\
Cypress, TX  77429 \\
USA \\
\href{mailto:rselcoe@entouchonline.net}{\tt rselcoe@entouchonline.net} \\ 
\ \\
Vladimir Shevelev \\
Department of Mathematics \\
Ben-Gurion University of the Negev \\
Beer-Sheva 84105\\
Israel \\
\href{mailto:shevelev@bgu.ac.il}{\tt shevelev@bgu.ac.il} \\ 
\ \\
N. J. A. Sloane\footnote{To whom correspondence should be addressed.} \\
The OEIS Foundation Inc. \\
11 South Adelaide Ave. \\
Highland Park, NJ 08904 \\
 USA \\
 \href{mailto:njasloane@gmail.com}{\tt njasloane@gmail.com} \\
 \ \\ 
Reinhard Zumkeller \\
Isabellastrasse 13 \\
D-80798 Munich \\
Germany \\
\href{mailto:reinhard.zumkeller@gmail.com}{\tt reinhard.zumkeller@gmail.com}

\vspace*{+.1in}

March 7, 2015
\vspace*{+.1in}

{\bf Abstract}
\end{center}

Define a sequence of positive integers by
the rule that $a(n)=n$ for $1 \le n \le 3$, and for $n\ge4$, $a(n)$
is the smallest number not already in the sequence
which has a common factor with $a(n-2)$ and is relatively
prime to $a(n-1)$.
We show that this is a permutation of the positive integers.
The remarkable graph of this sequence consists of runs
of alternating even and odd numbers, interrupted by 
small downward spikes followed by large upward spikes,
suggesting the eruption of geysers in Yellowstone National Park.
On a larger scale the points appear to lie on infinitely many
distinct curves.
There are several unanswered questions  concerning the locations
of these spikes and the equations for these curves.

\section{Introduction}\label{Sec1}
Let $(a(n))_{ n \ge 1}$ be defined
as in the Abstract.  This is sequence \seqnum{A098550}\footnote{Throughout this article, six-digit numbers prefixed by A 
refer to entries in \cite{OEIS}.} 
in the {\em On-Line Encyclopedia of Integer Sequences} 
\cite{OEIS}, contributed by Zumkeller in 2004.
Figures~\ref{Fig1} and \ref{Fig2} show two different views of its graph,
and the first 300 terms are given in Table \ref{Tab1}.
Figure~\ref{Fig1} shows terms $a(101)=47$ through $a(200)=279$, with
successive points joined by lines.
The downward spikes occur when $a(n)$ is a prime, and the larger upward spikes
(the ``geysers'', which suggested our
name for this sequence) happen two steps later.
In the intervals between spikes the sequence
alternates between even and odd values in a fairly smooth way.

Figure~\ref{Fig2} shows the first 300,000 terms, without lines connecting the points.
On this scale the points appear to fall on or close to a number 
of quite distinct curves.  The primes lie on the lowest curve (labeled ``p''),
and the even terms on the next curve (``E'').
The red line is the straight line $f(x)=x$, included for reference (it is not
part of the graph of the sequence).
The heaviest curve (labeled ``C''), just above the red line, consists of almost all the odd
composite numbers. The higher curves are the relatively sparse ``$\ka$p'' points, 
to be discussed in Section \ref{Sec3} when we study the growth
of the sequence more closely. It seems very likely that there are infinitely
many curves in the graph, although this, like other properties to be mentioned in 
Section \ref{Sec3}, is at present only a conjecture.
We are able to show that every number appears in the sequence,
and so $(a(n))_{n \ge 1}$ is a permutation of the natural numbers.

The definition of this sequence
resembles that of the EKG sequence (\seqnum{A064413}, \cite{EKG}),
which is that $b(n)=n$ for $n=1$ and $2$, and for $n \ge 3$,
$b(n)$ is the smallest number not already in the sequence
which has a common factor with $b(n-1)$.  However, 
the present sequence seems considerably more complex.
(The points of the EKG sequence fall on or near just three curves.)
Many other permutations of the natural numbers
are discussed in \cite{JHC1972, JHC2013, Erdos, UPINT}.

\begin{figure}[!ht]
\centerline{\includegraphics[angle=0, width=4in]{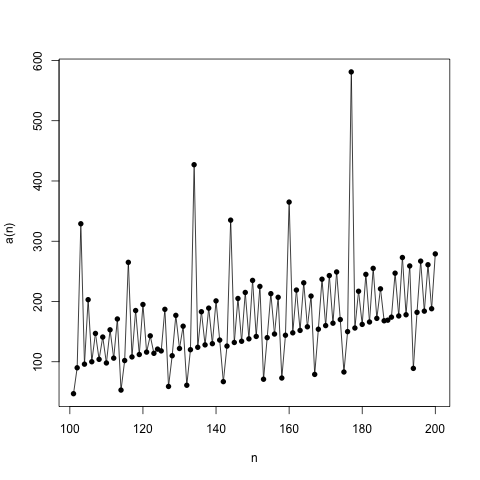}}
\caption{Plot of terms $a(101)$ through $a(200)$ of the Yellowstone permutation.
The downward spikes occur when $a(n)$ is a prime, and the larger upward spikes
(the ``geysers'') happen two steps later.
}
\label{Fig1}
\end{figure}

\begin{figure}[!ht]
\centerline{\includegraphics[angle=0, width=6.75in]{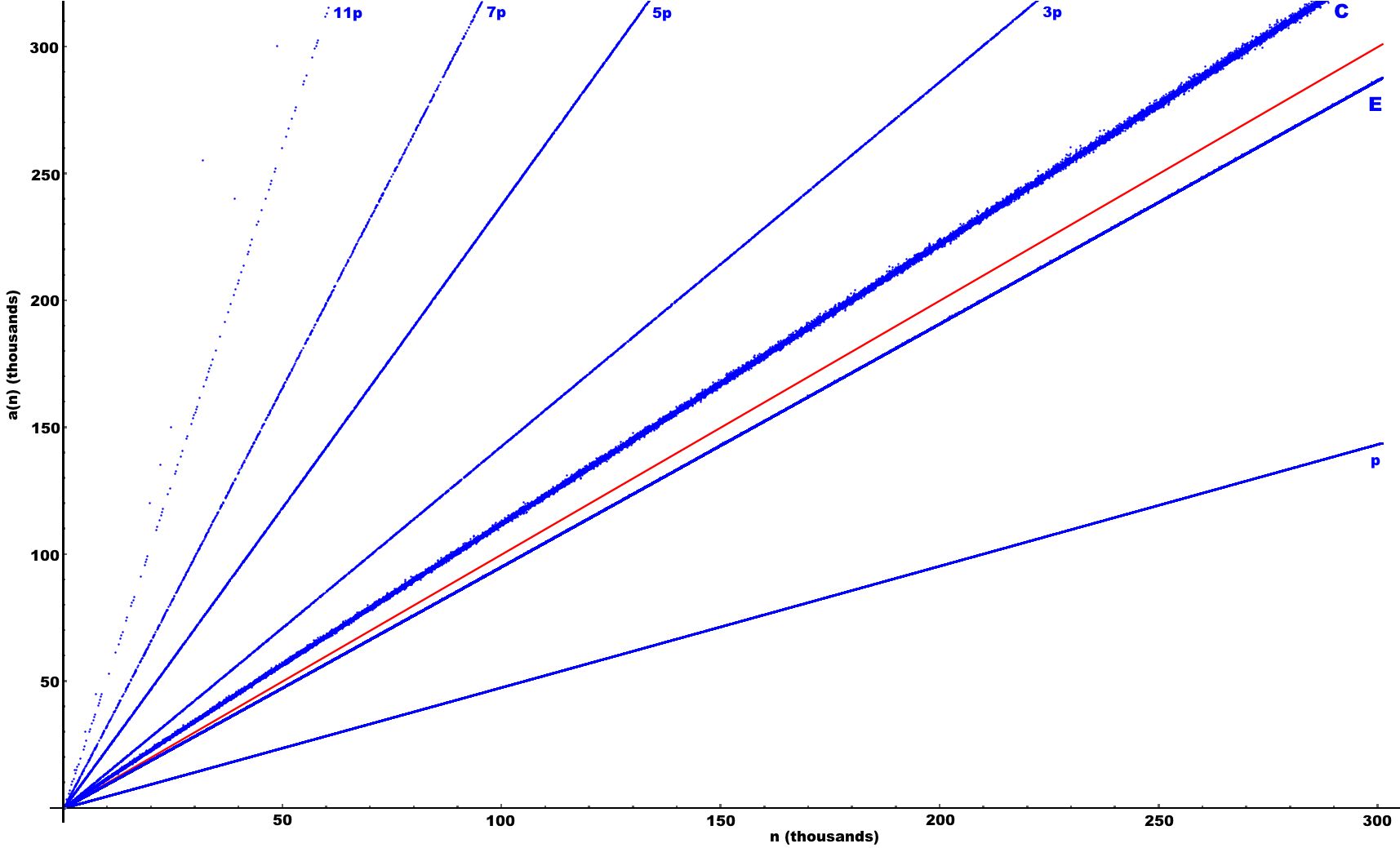}}
\caption{Scatterplot of the first 300,000 terms. The primes lie
on the lowest line (labeled ``p''), the even numbers on the second line (``E''), 
the majority of the odd
composite numbers on the third line (``C''), and the ``$\ka$p''
points on all the higher lines. The lines are not actually straight,
except for the red line $f(x)=x$, which
is included for reference.}
\label{Fig2}
\end{figure}

\section{Every number appears}\label{Sec2}

\begin{thm}\label{Th1}
$(a(n))_{n \ge 1}$ is a permutation of the natural numbers.
\end{thm}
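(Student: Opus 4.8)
The plan is to establish three things: (1) the rule always admits a legal choice, so the sequence is infinite; (2) $a(n)\to\infty$; and (3) every positive integer occurs. Since the terms are distinct by construction, (3) is exactly the assertion that the sequence is a permutation. Step (1) is routine: given $a(n-2)$ and $a(n-1)$, pick a prime $p\mid a(n-2)$; since $\gcd(a(n-2),a(n-1))=1$ we have $p\nmid a(n-1)$, so each integer $kp$ with $\gcd(k,a(n-1))=1$ shares the factor $p$ with $a(n-2)$ and is coprime to $a(n-1)$, and infinitely many of these are still available. Step (2) is automatic, since a set of distinct positive integers contains at most $M$ values not exceeding $M$. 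All the difficulty is in step (3), and we expect it to reduce entirely to the claim $(\star)$: \emph{for every prime $p$, infinitely many terms are divisible by $p$}. This will be the main obstacle.

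Granting $(\star)$, step (3) runs as follows. First, every prime $p$ occurs as a term: choose a large $k$ with $p\mid a(k)$; then $p\nmid a(k+1)$, so if $p$ never occurred it would be a legal choice for $a(k+2)$, forcing $a(k+2)\le p$ for infinitely many $k$ and contradicting (2). Now suppose some integer is missing and let $m$ be the least one; then $1,\dots,m-1$ all occur, and by (2) there is an $N$ with $a(n)>m$ for $n>N$, so those values are all used by step $N$. If some $n>N+2$ had $\gcd(m,a(n-2))>1$ and $\gcd(m,a(n-1))=1$, then $m$ would be a legal choice for $a(n)$ with every smaller number already used, forcing $a(n)=m$, which is impossible. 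Hence for $k>N$, whenever $a(k)$ shares a prime with $m$ so does $a(k+1)$; iterating this, either (a) no term past $N$ shares a prime with $m$, which contradicts $(\star)$ for a prime divisor of $m$, or (b) every term past some $K$ shares a prime with $m$. In case (b), consecutive terms past $K$ are coprime, hence divisible by distinct prime divisors of $m$; but then for any prime $\ell\nmid m$ the term equal to $\ell$ (which exists, as just shown) must occur at one of the finitely many positions before $K$, and there are infinitely many such $\ell$ — a contradiction. So every integer occurs.

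It remains to prove $(\star)$, which is the crux. We would first settle $p=2$, i.e.\ that infinitely many terms are even, and then imitate that argument for general $p$. Suppose only finitely many even terms, so $a(k)$ is odd for all $k>N$. Then: (i) only finitely many primes occur as terms — for if $a(k)=r$ is prime with $k>N$, then $2r$ is divisible by $r$ and coprime to $a(k+1)$ (which is odd and, sharing no factor with $a(k)$, not divisible by $r$), so were $2r$ unused it would be a legal choice for $a(k+2)$, whence $a(k+2)$, an odd multiple of $r$ not exceeding $2r$ and hence equal to $r=a(k)$, would violate distinctness; thus $2r$ is used, and only finitely many even numbers occur. (ii) No odd prime $r$ divides infinitely many terms — otherwise, for each $j\ge1$, at the infinitely many large $k$ with $r\mid a(k)$ the number $2^{j}r$ would be a legal choice for $a(k+2)$ (unused, divisible by $r$, coprime to the odd $a(k+1)$), forcing $a(k+2)\le 2^{j}r$ infinitely often, so every $2^{j}r$ occurs, giving infinitely many even terms. (iii) By (ii) together with the hypothesis on $2$, no prime divides infinitely many terms, so for each bound $B$ only finitely many terms are divisible by a prime $\le B$, and hence the least prime factor of $a(k)$ tends to $\infty$. (iv) For $k$ large let $r$ be the least prime factor of $a(k)$; by (i) and (iii), $r$ exceeds every prime occurring as a term, so $r$ is unused; but $r\mid a(k)$ and $r\nmid a(k+1)$, so $r$ is a legal choice for $a(k+2)$, giving $a(k+2)\le r$, while $a(k+2)$ is divisible by some prime factor of $a(k)$ and so is $\ge r$; thus $a(k+2)=r$ occurs as a term, a contradiction. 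The general case of $(\star)$ should follow the same pattern with $p$ replacing $2$; the delicate point, both for $p=2$ and in general, is the bookkeeping underlying (i) and (iv) — confirming that the small forced values really are still unused — and that is where we expect the real work to be.
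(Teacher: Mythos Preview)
Your plan is sound, and your deduction of surjectivity from $(\star)$ is essentially the paper's steps (v)--(vi). The substantive difference is in how $(\star)$ itself is proved.

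One correction first: the phrase ``the same pattern with $p$ replacing $2$'' does not quite work for your step~(i). For $p=2$ you used that the only odd multiple of $r$ at most $2r$ is $r$ itself; for a general prime $p$ the multiples of $r$ at most $pr$ that avoid $p$ are $r,2r,\ldots,(p-1)r$, and you cannot exclude them all. This is easy to repair, though: your step~(ii) \emph{does} generalize verbatim (use $p^{j}r$ in place of $2^{j}r$), so if some prime $p$ divides only finitely many terms then every prime does, in particular $2$ does --- and now your original $p=2$ argument (i)--(iv) runs unchanged to give the contradiction. The general case reduces to $p=2$ rather than imitating it.

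The paper's route to $(\star)$ is different and shorter. It first shows that infinitely many distinct primes divide terms of the sequence, hence that every prime $p$ divides at least one term (else no prime $q>p$ could either). Then, directly: if $p$ divides only finitely many terms, choose $i$ with $p^{i}$ dividing no term at all, pick a prime $q>p^{i}$ not dividing any early term, and look at the first multiple $a(m)=tq$ of $q$; then $tp^{i}$ is unused (since $p^{i}$ divides no term), shares a factor of $t$ with $a(m-2)$, is coprime to $a(m-1)$, and is smaller than $tq$ --- contradicting minimality of $a(m)$. The device of passing to a high power $p^{i}$ to guarantee the substitute is unused is the key trick (credited in the paper to Bradley Klee); it handles all primes uniformly in one short paragraph, whereas your bootstrapping through least prime factors is longer but more self-contained, never needing the preliminary global facts about which primes occur.
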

\begin{proof}
By definition, there are no repeated terms, so we need only 
show that every number appears. There are several steps in the argument.

(i) The sequence is certainly infinite, because the term 
$p a(n-2)$ is always  a candidate for $a(n)$, 
where $p$ is a prime not dividing any of $a(1), \ldots, a(n-1)$.

(ii) The set of primes that divide terms of the sequence is
infinite.  For if not, there is a prime $p$ such that every
term is the product of primes $< p$.  Using (i), let $m$ be
large enough that $a(m) > p^2$, and let $q$ be a common prime factor
of $a(m-2)$ and $a(m)$.  Since $q < p$, $qp < p^2 < a(m)$, and so $qp$
would have been a smaller choice for $a(m)$, a contradiction.

(iii) For any prime $p$, there is a term divisible by $p$.  For
suppose not. Then no prime $q>p$ can divide any term, for if it did,
let $a(n) = tq$ be the first multiple of $q$ to appear.  But then we
could have used $tp < tq$ instead.  So every prime divisor is $< p$,
contradicting (ii).

(iv) Any prime $p$ divides infinitely many terms. For suppose not.
Let $N_0$ be such that $p$ does not divide $a(n)$ for $n \ge N_0$.
Choose $i$ large enough that $p^i$ does not divide any term in the sequence,
and choose a prime $q > p^i$ which does not divide
any of $a(1), \ldots, a(N_0)$.
By (iii), there is some term divisible by $q$. Let $a(m)=tq$ be the first such term.
But now $tp^i < tq$ is a smaller candidate for $a(m)$, a contradiction.

(v) For any prime $p$ there is a term with $a(n)=p$. Again suppose not.
Using (i), choose $N_0$ large enough that $a(n)>p$ for all $n \ge N_0$.
By (iv), we can find an $n \ge N_0$ such that $a(n)=tp$ for some $t$.
Then $a(n+2)=p$, a contradiction.

(vi) All numbers appear. For if not, let $k$ be the smallest missing
number, and choose $N_0$ so that all of $1, \ldots, k-1$ have occurred in
$a(1), \dots, a(N_0)$.
Let $p$ be a prime dividing $k$. Since, by (iv),
$p$ divides infinitely many terms,
there is a number $N_1 > N_0$ such that $\gcd(a(N_1),k) > 1$.
This forces 
\beql{Eq1}
\gcd(a(n),k) > 1 \mbox{~for~\textbf{all}~}  n \ge N_1.
\eeq
(If not, there would be some $j \ge N_1$ where
$\gcd(a(j),k)>1$ and $\gcd(a(j+1),k)=1$, which would 
lead to $a(j+2)=k$.) But \eqn{Eq1} is impossible, because we know from (v) 
that infinitely many of the $a(n)$ are primes.
\end{proof}

\textbf{Remarks.}
The same argument, with appropriate modifications,
can be applied to many other sequences.
Let $\Omega$ be a sufficiently large set of 
positive integers,\footnote{For instance, let $\sP$ be an 
an infinite set of primes, and  take $\Omega$ to
consist of the positive numbers all of whose prime factors
belong to $\sP$. We could
also exclude any finite subset of numbers from $\Omega$.
We obtain the Yellowstone permutation by taking $\sP$ to be all primes, $\Omega$ to be the positive integers, and requiring that the sequence begin with $1,2,3$.}
and define a sequence $(c(n))_{n \ge 1}$ by specifying
that certain members of $\Omega$ must appear at the start
of the sequence (including $1$, if $1 \in \Omega$), and 
that thereafter $c(n) \in \Omega$ is the smallest
number not yet used which satisfies
$\gcd(c(n),c(n-2)) > 1$, $\gcd(c(n-1),c(n))=1$.
Then the resulting sequence will be a permutation of $\Omega$.
We omit the details.

For example, if we take $\Omega$ to be the odd positive integers,
and specify that the sequence begins $1,3,5$, we obtain 
\seqnum{A251413}.

Or, with $\Omega$ the positive integers again,
we can generalize our main sequence by taking 
the first three terms to be $1,x,y$ with $x>1$ and $y>1$ relatively prime.
For example, starting with $1,3,2$ gives \seqnum{A251555}, and
starting with $1,2,5$ gives \seqnum{A251554}, neither of which appears to merge 
with the main sequence,
whereas starting with $1,4,9$ merges with \seqnum{A098550} after five steps.
It would be interesting to know more about which of these sequences 
eventually merge.
It follows from Theorem~\ref{Th1} that a necessary and
sufficient condition for two sequences $(c(n))$,
$(d(n))$ of this type (that is, beginning $1,x,y$) to merge is that for some $m$, 
terms 1 through $m-2$ contain the same set of numbers,
and  $c(m-1)=d(m-1)$, $c(m)=d(m)$. 

\section{Growth of the sequence}\label{Sec3}

\begin{table}[!ht]
\caption{ The first 300 terms $a(20i+j), 0 \le i \le 14, 1 \le j \le 20$ of the Yellowstone permutation.
The primes (or downward spikes) are shown in red, the ``$\ka$p'' points (the upward spikes, or ``geysers'')  in blue.}
\label{Tab1}
{\scriptsize
$$
\begin{array}{r|rrrrrrrrrrrrrrrrrrrr}
i \backslash j& 1& 2& 3& 4& 5& 6& 7& 8& 9& 10& 11& 12& 13& 14& 15& 16& 17& 18& 19& 20  \\
\hline 
0 & 1&  {\color{red}2}&  {\color{red}3}&  {\color{blue}4}&  {\color{blue}9}&  8&  15&  14&  {\color{red}5}&  6&  {\color{blue}25}&  12&  35&  16&  {\color{red}7}&  10&  {\color{blue}21}&  20&  27&  22 \\
1& 39&  {\color{red}11}&  {\color{red}13}&  {\color{blue}33}&  {\color{blue}26}&  45&  28&  51&  32&  {\color{red}17}&  18&  {\color{blue}85}&  24&  55&  34&  65&  36&  91&  30&  49 \\
2& 38&  63&  {\color{red}19}&  42&  {\color{blue}95}&  44&  57&  40&  69&  50&  {\color{red}23}&  48&  {\color{blue}115}&  52&  75&  46&  81&  56&  87&  62 \\
3& {\color{red}29}&  {\color{red}31}&  {\color{blue}58}&  {\color{blue}93}&  64&  99&  68&  77&  54&  119&  60&  133&  66&  161&  72&  175&  74&  105&  {\color{red}37}&  70 \\
4& {\color{blue}111}&  76&  117&  80&  123&  86&  {\color{red}41}&  {\color{red}43}&  {\color{blue}82}&  {\color{blue}129}&  88&  135&  92&  125&  78&  145&  84&  155&  94&  165 \\
5& {\color{red}47}&  90&  {\color{blue}329}&  96&  203&  100&  147&  104&  141&  98&  153&  106&  171&  {\color{red}53}&  102&  {\color{blue}265}&  108&  185&  112&  195 \\
6& 116&  143&  114&  121&  118&  187&  {\color{red}59}&  110&  {\color{blue}177}&  122&  159&  {\color{red}61}&  120&  {\color{blue}427}&  124&  183&  128&  189&  130&  201 \\
7& 136&  {\color{red}67}&  126&  {\color{blue}335}&  132&  205&  134&  215&  138&  235&  142&  225&  {\color{red}71}&  140&  {\color{blue}213}&  146&  207&  {\color{red}73}&  144&  {\color{blue}365} \\
8& 148&  219&  152&  231&  158&  209&  {\color{red}79}&  154&  {\color{blue}237}&  160&  243&  164&  249&  170&  {\color{red}83}&  150&  {\color{blue}581}&  156&  217&  162 \\
9& 245&  166&  255&  172&  221&  168&  169&  174&  247&  176&  273&  178&  259&  {\color{red}89}&  182&  {\color{blue}267}&  184&  261&  188&  279 \\
10& 190&  291&  196&  {\color{red}97}&  180&  {\color{blue}679}&  186&  287&  192&  301&  194&  315&  202&  275&  {\color{red}101}&  198&  {\color{blue}505}&  204&  295&  206 \\
11& 285&  {\color{red}103}&  200&  {\color{blue}309}&  208&  297&  212&  253&  210&  299&  214&  325&  {\color{red}107}&  220&  {\color{blue}321}&  218&  303&  {\color{red}109}&  216&  {\color{blue}545} \\
12& 222&  305&  224&  345&  226&  327&  {\color{red}113}&  228&  {\color{blue}565}&  232&  339&  230&  333&  236&  351&  238&  363&  244&  319&  234 \\
13& 341&  240&  403&  242&  377&  246&  455&  248&  343&  250&  357&  254&  289&  {\color{red}127}&  272&  {\color{blue}381}&  256&  369&  260&  387 \\
14& 262&  375&  {\color{red}131}&  252&  {\color{blue}655}&  258&  355&  264&  395&  266&  405&  268&  385&  274&  371&  {\color{red}137}&  280&  {\color{blue}411}&  278&  393 
\end{array}
$$
}
\end{table}

From studying the first 100 million terms of the sequence
$(a(n))$, we believe we have an accurate model of how
the sequence grows.
However, at present we have no proofs for any of the following statements.
They are merely empirical observations.

The first 212 terms are exceptional (see Table \ref{Tab1}).
Starting at the 213th term, it appears that the sequence
is governed by what we shall call:

\textbf{Hypothesis A.}
(``A'' stands for ``alternating''.)
The sequence alternates between even and odd composite terms, except that, when an even term is reached which is twice a prime, the alternation of even and odd terms 
is disrupted, and we see five successive terms of the form
\beql{Eq2}
2p,~ 2i+1, ~ p, ~ 2j, ~ \ka p,
\eeq
where $p$ is an odd prime,  $i$ and $j$ are integers,
and $\ka < p$ is the least odd prime that does not divide~$j$.
The $\ka p$ terms are the ``geysers'' (\seqnum{A251544}).

For example, terms $a(213)$ to $a(217)$ are
$$
202=2 \cdot 101,~ 275,~ 101, ~ 198 = 2\cdot 3^2\cdot 11, ~ 505=5\cdot 101\,.
$$
Hypothesis A is only a conjecture, since we cannot rule out the possibility that 
this behavior breaks down at some much later point in the sequence.
It is theoretically possible, for example, that a term that is twice a prime is not
followed two steps later by the prime itself (as happens  after
$a(8)=14$, which is followed two steps later by $a(10)=6$ rather than $7$).
However, as we shall argue later in this section, this is unlikely to happen.

Under Hypothesis A, most of the time
the sequence alternates between even and odd composite terms,
and the $n$th term $a(n)$ is about $n$, to
a first approximation. However, the primes appear later than they
should, because $p$ cannot appear until the sequence
first reaches $2p$, which takes about $2p$ steps,
and so the primes are roughly on the line
$f(x)=x/2$.  On the other hand,
the term $\ka p$ in \eqn{Eq2} appears earlier than it should,
and lies roughly on the line $f(x)=\ka x/2$.

Continuing to assume that Hypothesis A holds for terms 213 onwards, 
we can give heuristic arguments that lead to better
asymptotic estimates, as follows.
Guided by \eqn{Eq2}, we divide the terms of the sequence
into several types: type $E$ terms, consisting of all the even terms;
type $p$, all the odd primes;
types $\ka p$ for $\ka= 3,5,7,11, \ldots$, the terms that appear
two steps after a prime; and
type $C$, all the odd composite terms that are not of type $\ka p$ for any $\ka$.

From term 213 onwards, even and odd terms (more precisely, types $E$ and $C$)
 alternate, except when the even term is 
twice a prime, when we see the five-term subsequence \eqn{Eq2},
containing two $E$ terms, one $p$ term,
one $\ka p$ term for some odd prime $\ka$,  and one $C$ term.
Between terms 213 and $n$, we will see about $\la$ of these five-term subsequences,
where $\la$ is the number of terms in that range that
are twice a prime. $\la$ is therefore approximately\footnote{We assume $n$ is very large,
and  ignore the fact that the first 212 terms are slightly exceptional---asymptotically 
this makes no difference.}
$\pi(a(n)/2)$, where $\pi(x)$ is the number
of primes $\le x$.

There are $n-5\la$ terms not in the 5-term subsequences,
so the total number of even terms out of $a(1), \ldots, a(n)$ is roughly
\beql{Eq3a}
\frac{n-5\la}{2} ~+~ 2\la ~=~ \frac{n-\la}{2} ~ \cong ~ \frac{n-\pi(\frac{a(n)}{2})}{2},
\eeq
where $\cong$ signifies ``is approximately equal to''.
Although the even terms do not increase monotonically
(compare Table~\ref{Tab1}), it appears to be a good approximation
to assume that, on the average, each even term
contributes 2 to the growth of the even subsequence,
and so, if $a(n)$ is an even term, we obtain
\beql{Eq3b}
a(n) ~\cong~ n-\pi\left(\frac{a(n)}{2}\right).
\eeq
In other words, the even terms should lie on or close to the 
curve $y=f_E(x)$ defined by the functional equation
\beql{Eq3c}
y~+~ \pi\left(\frac{y}{2}\right) ~=~ x.
\eeq
The primes then lie on the curve $f_p(x) = \frac{1}{2}f_E(x)$,
and the $\ka p$ terms on the curve
$f_{\ka p}(x) = \frac{\ka}{2} f_E(x)$ 
for $\ka = 3,5,7, \ldots$.

Although the reasoning that led us to \eqn{Eq3c} was far from rigorous,
it turns out that \eqn{Eq3c} is a remarkably good fit to the graph of
the even terms, at least for the first  $10^8$ terms.
We  solved \eqn{Eq3c} numerically, and computed the residual errors $a(n) - f_E(n)$.
The fit is very good indeed for the ``normal'' even terms,
those that do not belong to  the 5-term subsequences.
As can be seen from
Fig.~\ref{Fig2En}, up to $n=10^7$, the maximum error is less than 40, in
numbers which are around $10^7$.

\begin{figure}[!ht]
\centerline{\includegraphics[angle=0, width=4.00in]{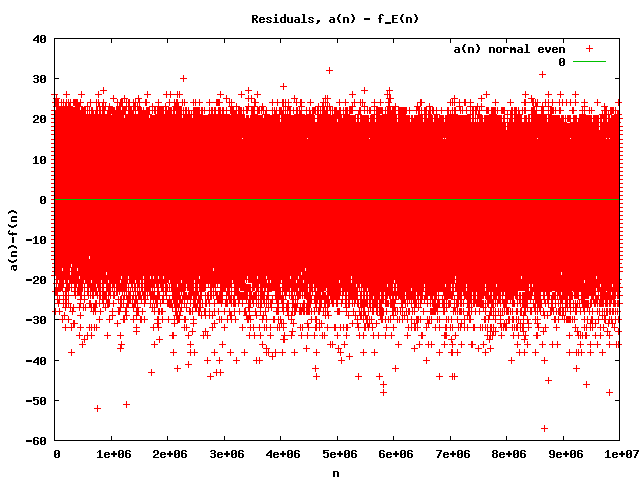}}
\caption{The difference between the ``normal'' even terms and the
approximation $f_E(n)$ given in \eqn{Eq3c} is
at most 40 for $n \le 10^7$.}
\label{Fig2En}
\end{figure}

\begin{figure}[!ht]
\centerline{\includegraphics[angle=0, width=4.00in]{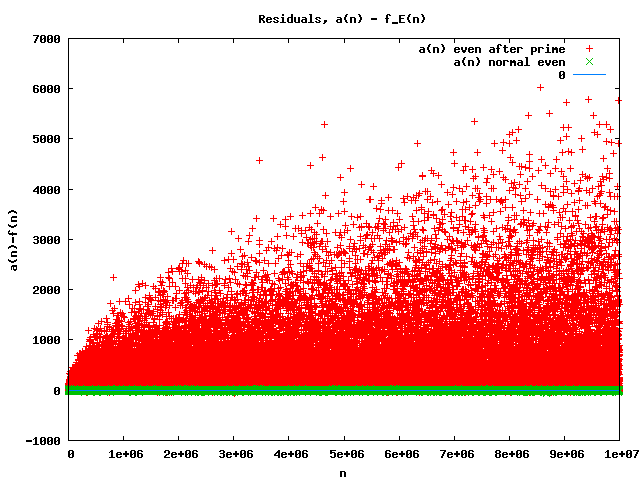}}
\caption{The difference between the even terms that follow a prime and the
approximation $f_E(n)$ given in \eqn{Eq3c}  is
at most 6000 for $n \le 10^7$.
The green points are the same errors shown in the previous figure, plotted
on this scale.}
\label{Fig2Ea}
\end{figure}

The fit is still good for the even terms  in the five-term subsequences,
 although not so remarkable,
as can be seen in Fig.~\ref{Fig2Ea}.
Up to $n=10^7$ there are errors as large as 6000, which
is on the order of $\sqrt{n}$. The errors for the 
``normal'' even terms are shown in this figure in green.

If we use $\pi(x) \sim  x/\log x$ in \eqn{Eq3c}, we obtain
\beql{Eq3d}
f_E(x) ~= ~ x \, \left( 1 ~-~ \frac{1}{2 \log x} ~+~ o\left(\frac{1}{\log x}\right) \right).
\eeq
However, \eqn{Eq3c} is a much better fit than just using the first two
terms on the right side of \eqn{Eq3d}.

We can study the curve $f_C(x)$ containing the type $C$ terms
(the odd composite terms not of type $\ka p$) in a similar manner.
This is complicated by the fact that the values of $\ka$ are hard to predict.
We therefore use a probabilistic model, and let $\si(\ka)$ denote
the probability that the multiplier in a $\ka p$ term is $\ka$.
Empirically, $\si(3) \cong 0.334$,
$\si(5) \cong 0.451$,
$\si(7) \cong 0.174, \ldots$.
The number of type $C$ terms in the first $n$ terms is (compare \eqn{Eq3a})
\beql{Eq4a}
\frac{n-5\la}{2} ~+~ \la ~=~ \frac{n-3\la}{2} ~ \cong ~ \frac{n-3\pi(\frac{f_E(n)}{2})}{2}.
\eeq
However, type $C$ terms skip over the primes, and we expect
to see $\pi(f_C(n))$ primes $\le f_C(n)$.
Type $C$ terms also skip over the $\ka p$ terms that have
already appeared in the sequence.
For a given value of  $\ka$, terms of type $\ka p$
will have been skipped over if $\ka p \le f_C(n)$, and
if that value of $\ka$ was chosen, so the number of $\ka p$ terms
we skip over is
$$
\sum_{ {  \mbox{~odd~primes~}} \ka} \si(\ka) \,\pi \left( \frac{f_C(n)}{\ka} \right),
$$
where here and in the next two displayed equations the summation
ranges over all odd primes $\ka \le \sqrt{f_C(n)}$.
Each of these events contributes 2, on the average,
to the growth of the $C$ terms, so we obtain
\beql{Eq4b}
f_C(n) ~\cong~ n-3\pi\left(\frac{f_E(n)}{2}\right)+2\pi(f_C(n))
+2 \sum_{ {  \mbox{~odd~primes~}} \ka} \si(\ka) \,\pi \left( \frac{f_C(n)}{\ka} \right).
\eeq
In other words, the type $C$ terms should lie on or close to the curve $y = f_C(x)$
defined by the functional equation
\beql{Eq4c}
y ~-~ 2 \pi(y) ~-~
2 \sum_{ {  \mbox{~odd~primes~}} \ka} \si(\ka) \,\pi \left( \frac{y}{\ka} \right)
 ~=~ x ~-~ 3\pi\left(\frac{f_E(x)}{2}\right).
\eeq

Equation \eqn{Eq4c} can be solved
numerically, using the values of $f_E(x)$ computed from \eqn{Eq3c},
and gives a  good fit to the graph of the type $C$ terms.
As can be seen from Fig.~\ref{Fig2C}, the errors in the first $10^7$ terms
are on the order of $5 \sqrt{n}$. It is not surprising
that the errors are larger for type $C$ terms than type $E$ terms,
since as can be seen in Fig.~\ref{Fig2}, the curve
with the $C$ points is much thicker than the $E$ curve.

\begin{figure}[!ht]
\centerline{\includegraphics[angle=0, width=4.00in]{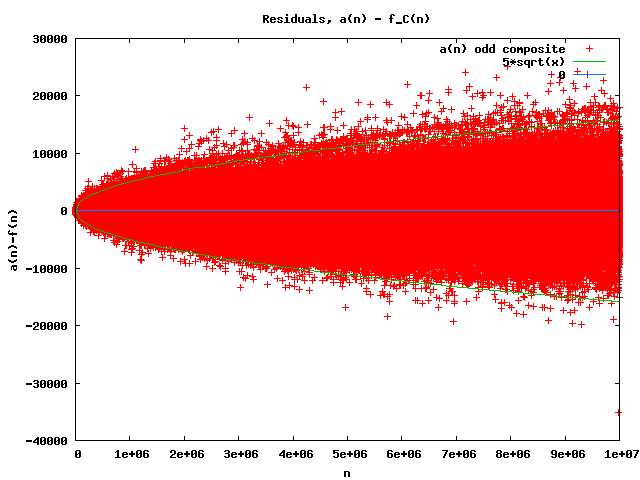}}
\caption{The difference between the odd composite (or type $C$) terms and the
approximation $f_C(n)$  defined implicitly by \eqn{Eq4c} is
on the order of $5 \sqrt{n}$  for $n \le 10^7$.}
\label{Fig2C}
\end{figure}

If we use $\pi(x) \sim  x/\log x$ in \eqn{Eq4c}, we obtain
\beql{Eq4d}
f_C(x) ~= ~ x \, \left( 1 ~+~ \frac{\al}{\log x} ~+~ o\left(\frac{1}{\log x}\right) \right),
\eeq
where 
\beql{Eq4e}
\al ~=~ \frac{1}{2} ~+~ 
2 \sum_{ {  \mbox{~odd~primes~}} \ka \ge 3} \frac{ \si(\ka)}{\ka}
  ~\cong~ 0.96,
\eeq
and now the summation is over all odd primes $\ka$.
 
To summarize, our estimates for the curves $f_E(x)$ and $f_C(x)$
containing the terms of types $E$ and $C$ are given by
\eqn{Eq3c} and \eqn{Eq4c}.
Equations \eqn{Eq3d} and \eqn{Eq4d} have
a simpler form but are less precise.
The primes  lie on the curve $f_p(x) = \frac{1}{2}f_E(x)$,
and the $\ka p$ terms on the curves
$f_{\ka p}(x) = \frac{\ka}{2} f_E(x)$ 
for $\ka = 3,5,7, \ldots$.
In Fig.~\ref{Fig2},
reading counterclockwise from the horizontal axis,
we see the curves 
$f_p(x)$, $f_E(x)$,  the red line $f(x)=x$, then $f_C(x)$,
$f_{3p}(x)$, $f_{5p}(x)$, $f_{7p}(x)$, $f_{11p}(x)$,
and a few points from $f_{\ka p}(x)$ for $\ka \ge 13$.
At this scale, the curves look straight.

To see why Hypothesis A is unlikely to fail,
note that when
we add an even number to the sequence, most
of the time it belongs to the interval $[m_E,M_E]$ (\seqnum{A251546},  \seqnum{A251557}),
which we call the {\em even frontier},
where $m_E$ is the smallest even number that is not yet in the sequence, and 
$M_E$ is 2 more than the largest even number that has appeared.
The {\em odd composite frontier} $[m_C, M_C]$ (\seqnum{A251558}, \seqnum{A251559})
 is defined similarly for the type $C$ points.
For example, when $n=10^6$, $a(10^6)=1094537$,
the even frontier is $[960004,\ldots,960234]$
and the odd composite frontier is $[1092467,\ldots, 1097887]$.
In fact, at this point, no even number in the range
$960004,\ldots,960230$ is in the sequence.
What we see here is typical of the general situation:
the length of the even frontier,  $M_E-m_E$, is much less
 than the length of the odd composite frontier, $M_C-m_c$;
most of the terms in the even frontier are available; and the two
frontiers are well separated.
As long as this continues, the even and odd frontiers
will remain separated, and Hypothesis A will hold.
The much larger width of the odd composite frontier is reflected in the greater thickness of  the``C'' curve in Figure~\ref{Fig2}.

We know from the proof of Theorem \ref{Th1}
that if $p<q$ are primes, the first term divisible by $p$ occurs
before the first term divisible by $q$.
But we do not know that $p$ itself occurs before $q$.
This would be a consequence of Hypothesis A, but perhaps it
can be proved by arguments similar to those used to prove Theorem \ref{Th1}.
Sequences \seqnum{A252837} and \seqnum{A252838} contain
additional information related to Hypothesis A.

The OEIS contains a number of other sequences
(e.g.,
\seqnum{A098548},
\seqnum{A249167},
\seqnum{A251604},
\seqnum{A251756},
\seqnum{A252868})
whose definition has a similar flavor to that of the Yellowstone permutation.
Two sequences contributed by Adams-Watters are 
especially noteworthy: \seqnum{A252865} is an
analog of \seqnum{A098550} for square-free numbers, and
\seqnum{A252867} is a set-theoretic version.

\begin{figure}[!ht]
\centerline{\includegraphics[angle=0, width=6.75in]{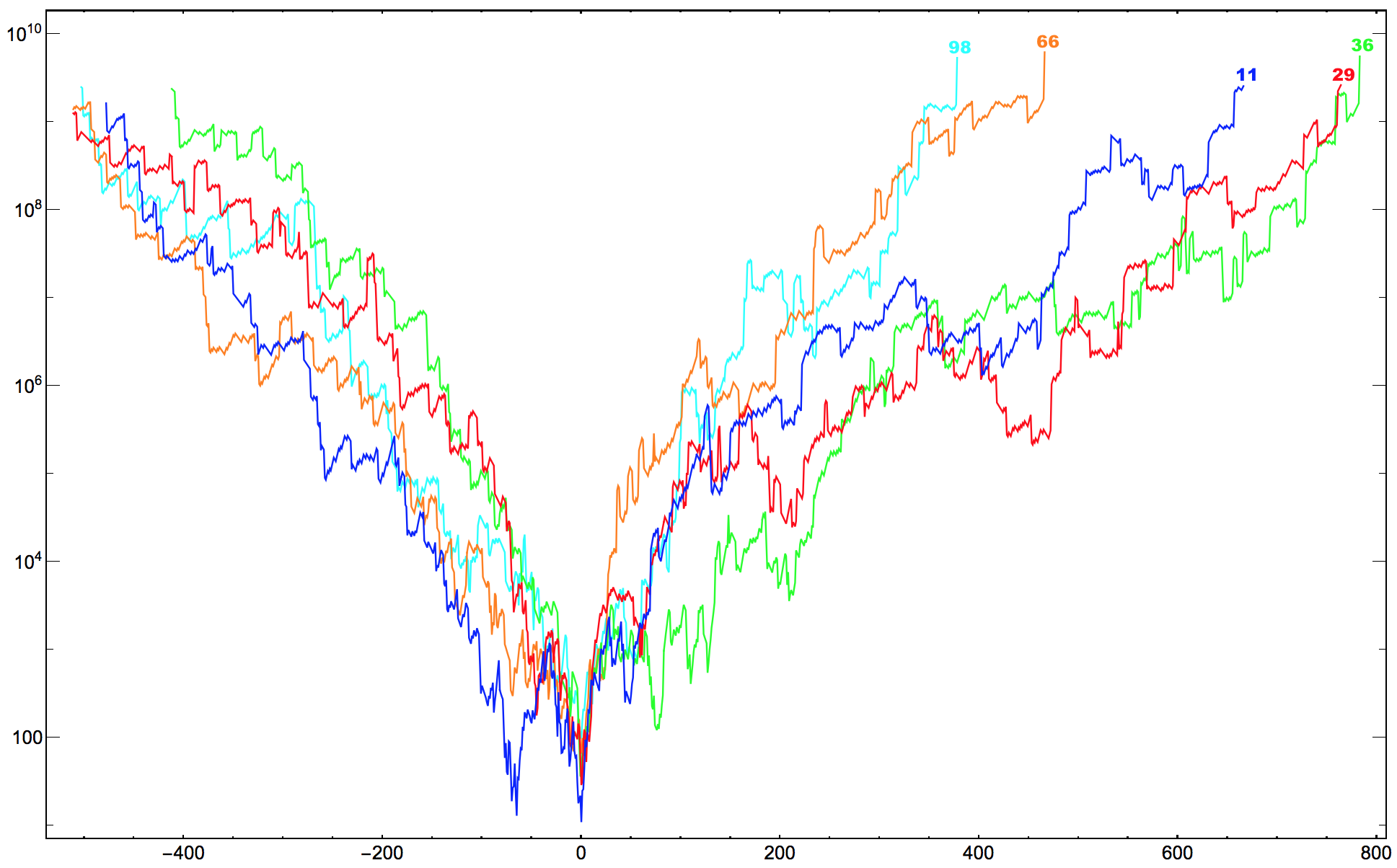}}
\caption{Portions of the conjecturally infinite
orbits whose smallest terms are
$11$ (blue), $29$ (red),
$36$ (green),  $66$ (orange),  $98$ (cyan).}
\label{Fig3}
\end{figure}

\section{Orbits under the permutation}\label{Sec4}

Since the sequence is a permutation of the positive integers, 
it is natural to study its orbits.
It appears that the only fixed points are 
$1, 2, 3, 4, 12, 50, 86$
(\seqnum{A251411}). 
There are certainly no other fixed points below $10^9$, and
Fig.~\ref{Fig2} makes it very plausible that there are
no further points on the red line.

At present, 27 finite cycles are known besides the seven fixed points.
For example, $6$ is in the cycle $(6,8,14,16,10)$.
The finite cycle with the largest minimum term known to
date is the cycle of length 45 containing 756023506.

We conjecture that, on the other hand,
 almost all positive numbers belong to infinite orbits. 
See Fig.~\ref{Fig3} for portions of the conjecturally infinite
orbits whose smallest terms are respectively
$11$ (\seqnum{A251412}), $29$, $36$, $66$, and $98$
(cf.\ \seqnum{A251556}).
The orbits have been displaced sideways so that the conjectured  minimal value is
positioned at $x=0$.
For example, the blue curve to
the right of $x=0$ shows the 
initial portion of the trajectory of $11$ under
repeated applications of the Yellowstone
permutation, while the curve to the left
 shows the trajectory under repeated applications of
the inverse permutation.
In other words, the blue curve, from upper left to upper right,
is a section of the orbit whose minimal value appears to be 11.
The inverse trajectory of $11$ has near-misses after three steps, 
when it reaches 18, and after 70 steps, when it reaches 19,
but once the numbers get large it seems that there
is little chance  that   the forward and inverse trajectories will ever meet,
implying that the orbit is infinite.Ä

However, because of the erratic appearance of the trajectories
in Fig.~\ref{Fig3}, there is perhaps a greater possibility that these paths may
eventually close, or merge, compared with the situation
for other well-known permutations. For example,
in the case of the ``amusical permutation'' 
of the nonnegative integers (\seqnum{A006368}) studied by
Conway and Guy \cite{JHC1972, JHC2013, UPINT}, the
empirical evidence that most orbits are infinite
is much stronger---compare Figs.~1 and 2 of \cite{JHC2013}
with our Fig.~\ref{Fig3}. 

\section{Acknowledgments}
We would like to thank Bradley Klee for the key idea of considering
terms of the form $p^i$ in
part (iv) of the proof of Theorem \ref{Th1}.

Sequence \seqnum{A098550} was the subject of many discussions
on the {\em Sequence Fans Mailing List} in 2014.
Franklin~T.~Adams-Watters observed that when the first 10000 terms
are plotted, the slopes of the various lines in the graph 
were, surprisingly,  not recognizable as rational numbers.
Jon E. Schoenfield noticed that if the third (or ``$C$'') line  was ignored,
the ratios of the other slopes were consecutive primes.
See \S\ref{Sec3} for our conjectured explanation
of these observations.
L.~Edson~Jeffery observed that there appear to be only seven fixed points
(see \S\ref{Sec4}).

Other contributors to the discussion include
Maximilian~F.~Hasler,
Robert~Israel,
Beno\^{\i}t~Jubin,
Jon~Perry,
Robert~G.~Wilson~v,
and Chai Wah~Wu.

We thank several of these people, especially Beno\^{\i}t~Jubin and
Jon E. Schoenfield,
for sending comments on the manuscript.

\bigskip
\hrule
\bigskip

\noindent 2010 {\it Mathematics Subject Classification}:
Primary 11Bxx, 11B83, 11B75.

\noindent \emph{Keywords: } Number sequence,
EKG sequence, permutation of natural numbers.

\bigskip
\hrule
\bigskip

\noindent (Concerned with sequences
\seqnum{A006368},
\seqnum{A064413},
\seqnum{A098548},
\seqnum{A098550},
\seqnum{A249167},
\seqnum{A249943},
\seqnum{A251237},
\seqnum{A251411}--\seqnum{A251413}, 
\seqnum{A251542}--\seqnum{A251547},
\seqnum{A251554}--\seqnum{A251559},
\seqnum{A251604},
\seqnum{A251621},
\seqnum{A251756},
\seqnum{A252837},
\seqnum{A252838},
\seqnum{A252865},
\seqnum{A252867},
\seqnum{A252868},
\seqnum{A253048}, 
\seqnum{A253049}.
The OEIS entry \seqnum{A098550} contains
cross-references to many other related sequences.)

\bigskip
\hrule
\bigskip

\end{document}